\documentclass{amsart}
\usepackage{graphicx}
\usepackage{amssymb}
\usepackage{epstopdf}
\usepackage{amsmath}
\usepackage{amsthm}
\usepackage[numbers]{natbib}

\DeclareGraphicsRule{.tif}{png}{.png}{`convert #1 `dirname #1`/`basename #1 .tif`.png}

\newtheorem{theorem}{Theorem}

\theoremstyle{definition}
\newtheorem{definition}[theorem]{Definition}
\newtheorem{prop}[theorem]{Proposition}
\newtheorem{lem}[theorem]{Lemma}

\newtheorem{xca}[theorem]{Exercise}
\newtheorem{cor}[theorem]{Corollary}
\newtheorem{prob}[theorem]{Problem}

\def\vp{\varphi}
\def\al{\alpha}
\def\om{\omega}

\def\fa{\forall}
\def\ex{\exists}
\def\iff{\Longleftrightarrow}



\def\ar{arithmetic}
\def\ct{countable}
\def\au{automorphism}

\def\fo{first-order}

\def\be{\begin{xca}}
\def\ee{\end{xca}}

\def\rs{recursively saturated}
\def\lpa{{\mathcal L}_{\sf PA}}



\DeclareMathOperator{\dcl}{dcl}

\begin{document}
\title{Neutrally Expandable Models of Arithmetic}
\author{Athar Abdul-Quader}
\author{Roman Kossak}

\date{December 18, 2017}                                           

\begin{abstract}
A subset of a model of {\sf PA} is called neutral if it does not change the $\dcl$ relation. A model with undefinable neutral classes is called neutrally expandable. We study the existence and non-existence of neutral sets in various models of {\sf PA}. We show that cofinal extensions of prime models are neutrally expandable, and $\om_1$-like neutrally expandable models exist, while no recursively saturated model is neutrally expandable. We also show that neutrality is not a first-order property. In the last section, we study a local version of neutral expandability.
\end{abstract}

\subjclass[2010]{Primary 03C62, 03H15}
\keywords{models of arithmetic, expansions, definable closure}

\maketitle

\section{Introduction}
All models in this note are models of {\sf PA} and their expansions. We will use $\mathcal M$, $\mathcal N$, $\mathcal K$, etc.~for models of {\sf PA}, and $M$, $N$, $K$, etc.~for their respective domains. We will write ${\mathcal M}\prec_{\rm end}{\mathcal N}$ if ${\mathcal N}$ is an elementary end extension of ${\mathcal M}$, ${\mathcal M}\prec_{\rm cof}{\mathcal N}$ if ${\mathcal N}$ is a cofinal extension of ${\mathcal M}$, and
${\mathcal M}\prec_{\rm cons}{\mathcal N}$ if ${\mathcal N}$ is an elementary conservative extension of ${\mathcal M}$.

A subset $X$ of a model ${\mathcal M}$ is a \emph{class} if for each $a\in M$, $\{x \in X : {\mathcal M}\models x<a\}$ is definable in ${\mathcal M}$. A subset $X$ of $M$ is inductive if $({\mathcal M},X)\models {\sf PA}^*$, i.e. the induction schema holds in $({\mathcal M},X)$ for all formulas of  the language of {\sf PA} with a unary predicate symbol interpreted as $X$. All inductive sets are classes. 

Simpson proved that every \ct\ model ${\mathcal M}$ has an inductive undefinable subset $X$ such that $({\mathcal M},X)$ is pointwise definable \cite{sim}.  Enayat showed that there are non-prime models ${\mathcal M}$, such that for every undefinable class $X$, $({\mathcal M},X)$ is pointwise definable \cite{ali}. The first author called such models Enayat, and studied them in detail in \cite{ath}.

In search for a general notion of a generic subset of a model of {\sf PA}, Alf Dolich suggested the definition below. Since the term ``generic" has been already used in \ar\ context, we will use another name.

\begin{definition} A subset $X$ of $M$ is  \emph{neutral} if for all $a$ in $M$, the definable closure of $a$ in ${\mathcal M}$, $\dcl^{\mathcal M}(a)$, and the definable closure of $a$ in $({\mathcal M},X)$, $\dcl^{({\mathcal M},X)}(a)$,  are the same. We will call a model \emph{neutrally expandable} if it has an undefinable neutral class.
\end{definition}

For trivial reasons, prime models are neutrally expandable. Every subset of a prime model is neutral. In every model, 0-definable sets are neutral. It is less trivial that the standard cut is neutral in every model; this result follows from a generalization of a theorem of Kanovei \cite{kan}, \cite[Theorem 8.4.7]{ks}. Constructing neutral undefinable classes in models that are not prime is a harder task. 

\section{Neutral Inductive Sets}\label{s1}
If $X$ is a neutral inductive subset of $M$, then for each ${\mathcal K}\prec {\mathcal M}$, $K$ is closed under the Skolem terms of $\lpa \cup \{ X \}$, i.e.~the language of {\sf PA} with the unary relation symbol interpreted in ${\mathcal M}$ as $X$. Hence; by the Tarski-Vaught test, we have the following characterization of inductive neutral sets.
 
\begin{prop}\label{ind} If $X$ is an inductive subset of $M$, then $X$ is neutral iff for every ${\mathcal K}\prec {\mathcal M}$, $({\mathcal K},X\cap K)\prec ({\mathcal M},X)$. 
\end{prop}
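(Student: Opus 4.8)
The plan is to derive both directions from the Tarski--Vaught test, using the fact that, since $X$ is inductive, $({\mathcal M},X)\models{\sf PA}^*$, so $({\mathcal M},X)$ and all of its elementary substructures admit definable Skolem functions for every formula of $\lpa\cup\{X\}$ (via the least-number principle). I would first record the one trivial half of neutrality, $\dcl^{\mathcal M}(a)\subseteq\dcl^{({\mathcal M},X)}(a)$ for all $a$, which holds because every $\lpa$-formula is an $\lpa\cup\{X\}$-formula. I would also recall that ${\sf PA}$ proves the existence of a definable pairing function with definable projections, so that for any finite tuple $\bar a$ one has $\dcl^{({\mathcal N})}(\bar a)=\dcl^{({\mathcal N})}(\seq{\bar a})$, both in ${\mathcal N}={\mathcal M}$ and in ${\mathcal N}=({\mathcal M},X)$, where $\seq{\bar a}$ is the code of $\bar a$.

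For the forward direction, assume $X$ is neutral and fix ${\mathcal K}\prec{\mathcal M}$. By the Skolem-function form of the Tarski--Vaught test, $({\mathcal K},X\cap K)\prec({\mathcal M},X)$ follows once we know that $K$ is closed under all Skolem terms of $\lpa\cup\{X\}$, equivalently that $\dcl^{({\mathcal M},X)}(\bar a)\subseteq K$ for every finite tuple $\bar a$ from $K$. Here I reduce to single elements using the pairing function: since ${\mathcal K}\models{\sf PA}$, the code $\seq{\bar a}$ lies in $K$, and then $\dcl^{({\mathcal M},X)}(\bar a)=\dcl^{({\mathcal M},X)}(\seq{\bar a})=\dcl^{\mathcal M}(\seq{\bar a})\subseteq K$, where the middle equality is neutrality applied to the single element $\seq{\bar a}$ and the final inclusion holds because ${\mathcal K}\prec{\mathcal M}$ is closed under $\dcl^{\mathcal M}$. (This is essentially the remark preceding the statement, made precise for tuples.)

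For the converse, assume $({\mathcal K},X\cap K)\prec({\mathcal M},X)$ for every ${\mathcal K}\prec{\mathcal M}$, and fix $a\in M$. Take ${\mathcal K}$ to be the Skolem hull of $a$ in ${\mathcal M}$, so that ${\mathcal K}\prec{\mathcal M}$ and $K=\dcl^{\mathcal M}(a)$. By hypothesis $({\mathcal K},X\cap K)\prec({\mathcal M},X)$, and since an elementary substructure is always closed under the definable closure operator of the ambient structure, $\dcl^{({\mathcal M},X)}(a)\subseteq K=\dcl^{\mathcal M}(a)$; together with the trivial inclusion this yields $\dcl^{({\mathcal M},X)}(a)=\dcl^{\mathcal M}(a)$. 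As $a$ was arbitrary, $X$ is neutral. I expect no real obstacle here: the proof is a direct application of Tarski--Vaught in both directions, and the only place needing a moment's care is the reduction from tuples to single elements in the forward direction, which is handled by the pairing function available in any model of ${\sf PA}$.
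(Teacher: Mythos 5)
Your argument is correct and follows essentially the same route as the paper, which derives the proposition from the Tarski--Vaught test together with the observation that neutrality makes every ${\mathcal K}\prec{\mathcal M}$ closed under the Skolem terms of $\lpa\cup\{X\}$ (available since $({\mathcal M},X)\models{\sf PA}^*$). You merely spell out the details the paper leaves implicit, namely the reduction from tuples to single parameters via pairing and the converse direction obtained by taking ${\mathcal K}=\dcl^{\mathcal M}(a)$.
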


Proposition \ref{ind} has an immediate corollary.

\begin{cor} If $X$ is a neutral  inductive subset of $M$, and ${\mathcal K}\prec {\mathcal M}$, then $X\cap K$ is an inductive neutral subset of ${\mathcal K}$.
\end{cor}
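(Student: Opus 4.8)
The plan is to apply Proposition \ref{ind} twice: once "downward" to ${\mathcal K}$ itself, to get both inductiveness of $X\cap K$ and the base elementarity, and once more to an arbitrary ${\mathcal J}\prec{\mathcal K}$, to verify the neutrality criterion for $X\cap K$ inside ${\mathcal K}$.

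First I would establish that $X\cap K$ is inductive in ${\mathcal K}$. Since $X$ is neutral and inductive in ${\mathcal M}$, Proposition \ref{ind} (with ${\mathcal K}$ playing the role of the elementary submodel) gives $({\mathcal K},X\cap K)\prec({\mathcal M},X)$. As $({\mathcal M},X)\models{\sf PA}^*$ and ${\sf PA}^*$ is a first-order schema in the language $\lpa\cup\{X\}$, elementarity passes it down to $({\mathcal K},X\cap K)$; hence $X\cap K$ is inductive in ${\mathcal K}$, and in particular a class.

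Next, to see that $X\cap K$ is neutral in ${\mathcal K}$, I would invoke the characterization in Proposition \ref{ind} once more, now relative to the ambient model ${\mathcal K}$: it suffices to check that for every ${\mathcal J}\prec{\mathcal K}$ one has $({\mathcal J},(X\cap K)\cap J)\prec({\mathcal K},X\cap K)$. Note $(X\cap K)\cap J=X\cap J$ because $J\subseteq K$. By transitivity of $\prec$ we have ${\mathcal J}\prec{\mathcal M}$, so Proposition \ref{ind} applied in ${\mathcal M}$ yields $({\mathcal J},X\cap J)\prec({\mathcal M},X)$; and we already have $({\mathcal K},X\cap K)\prec({\mathcal M},X)$. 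Since $({\mathcal J},X\cap J)$ is a substructure of $({\mathcal K},X\cap K)$ and both are elementary substructures of $({\mathcal M},X)$, the standard "sandwich" fact---if $A\subseteq B$, $A\prec C$ and $B\prec C$ then $A\prec B$---gives $({\mathcal J},X\cap J)\prec({\mathcal K},X\cap K)$, which is exactly what is needed.

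I do not expect a real obstacle here; the one thing to be careful about is the bookkeeping that restricting the trace of $X$ twice agrees with restricting it once ($(X\cap K)\cap J=X\cap J$), and that Proposition \ref{ind} is applied with the correct ambient model in each of the two uses. Beyond that, the argument is just the routine transfer of a first-order schema along an elementary embedding together with the sandwich lemma.
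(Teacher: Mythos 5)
Your argument is correct, and it is exactly the derivation from Proposition \ref{ind} that the paper has in mind when it calls this an immediate corollary (the paper gives no explicit proof): elementarity $({\mathcal K},X\cap K)\prec({\mathcal M},X)$ transfers ${\sf PA}^*$ downward, and applying Proposition \ref{ind} inside ${\mathcal K}$ together with the sandwich fact for elementary substructures yields neutrality. The bookkeeping points you flag ($(X\cap K)\cap J=X\cap J$, and which model is ambient in each use of Proposition \ref{ind}) are handled correctly, so there is nothing to add.
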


\begin{prop}\label{prop1} If ${\mathcal M}$ is prime and ${\mathcal M}\prec_{\rm cof} {\mathcal N}$, then ${\mathcal N}$ is neutrally expandable.
\end{prop}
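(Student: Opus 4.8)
The plan is to exhibit an undefinable neutral class of a very restricted form: a cofinal $\omega$-sequence lying inside the prime submodel of ${\mathcal N}$.

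If ${\mathcal N}={\mathcal M}$ then ${\mathcal N}$ is prime and we are done, so assume ${\mathcal N}\neq{\mathcal M}$. Then ${\mathcal M}$ must be nonstandard, since the standard model has no proper cofinal extension, and $M=\dcl^{\mathcal N}(\emptyset)$: indeed $M=\dcl^{\mathcal M}(\emptyset)\subseteq\dcl^{\mathcal N}(\emptyset)$ because ${\mathcal M}$ is prime, while every element of $N$ that is $\emptyset$-definable in ${\mathcal N}$ already lies in $M$ by elementarity. Thus the definable elements of ${\mathcal N}$ are cofinal, and every finite subset of $M$ is $\emptyset$-definable in ${\mathcal N}$, being a finite union of $\emptyset$-definable singletons. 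I would then fix a strictly increasing cofinal $\omega$-sequence $x_0<x_1<x_2<\cdots$ in $M$ and put $X=\{x_n:n\in\omega\}$. Given $a\in N$, choose $b\in M$ with $a<b$; then $X\cap[0,b)=\{x_0,\dots,x_{k-1}\}$ for some $k\in\omega$, this set is $\emptyset$-definable in ${\mathcal N}$, and hence $X\cap[0,a)$ is definable in ${\mathcal N}$ from the parameter $a$. So $X$ is a class of ${\mathcal N}$; moreover each bounded part $X\cap[0,b)$, $b\in M$, is $\emptyset$-definable in ${\mathcal N}$ and is contained in $M$.

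Next I would arrange that $X$ is undefinable in ${\mathcal N}$. There are $2^{\aleph_0}$ distinct strictly increasing cofinal $\omega$-sequences in the countable set $M$, while at most $|N|$ subsets of $N$ are definable in ${\mathcal N}$; so when ${\mathcal N}$ is countable some $X$ of the above form is undefinable in ${\mathcal N}$, and in general one gets such an $X$ by diagonalizing against the definable subsets of ${\mathcal N}$ that are contained in $M$. Fix such an $X$.

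The remaining, and main, point is that $X$ is neutral in ${\mathcal N}$. Let $a\in N$ and suppose $d$ is the unique element with $({\mathcal N},X)\models\psi(d,a)$, where $\psi$ is a formula of the language of {\sf PA} with an added predicate for $X$; we must see that $d\in\dcl^{\mathcal N}(a)$. Pick $b\in M$ with $a,d<b$. Since $M\subseteq\dcl^{\mathcal N}(a)$, it is enough to find a formula $\psi'$ of the language of {\sf PA} and parameters $\bar m\in M$ such that $d$ is the unique solution of ${\mathcal N}\models\psi'(x,a,\bar m)$. The strategy is to eliminate the predicate for $X$ from $\psi$: within a sufficiently large bounded region one replaces each occurrence ``$t\in X$'' by the $\emptyset$-formula (with an $M$-parameter) defining the relevant finite initial part of $X$, absorbing the witnesses that actually occur into $M$ by cofinality. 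The hard part is precisely that the quantifiers of $\psi$ range over all of $N$ and ``$t\in X$'' is queried at unbounded heights, so this is not merely a matter of bounding quantifiers; I would handle it by appealing to the generalization of Kanovei's theorem already invoked for the standard cut (\cite[Theorem 8.4.7]{ks}), which applies to $X$ because $X$ is an increasing $\omega$-union of finite $\emptyset$-definable sets, exactly as the standard cut is the increasing $\omega$-union of the initial segments $[0,n)$. With the elimination in hand, $d$ is definable in ${\mathcal N}$ from $a$ and parameters in $M$, hence $d\in\dcl^{\mathcal N}(a)$; so $X$ is an undefinable neutral class and ${\mathcal N}$ is neutrally expandable.
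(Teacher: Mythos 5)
Your preliminary steps are fine: since ${\mathcal M}$ is prime, $M=\dcl^{\mathcal N}(\emptyset)$, so a cofinal $\omega$-sequence $X\subseteq M$ is indeed a class of ${\mathcal N}$ (and undefinability actually comes for free, since a definable unbounded set in a nonstandard model has a definable enumeration and so cannot have order type $\omega$). The genuine gap is the final, essential step: the claim that such an $X$ is neutral. You justify it by saying the generalization of Kanovei's theorem \cite[Theorem 8.4.7]{ks} ``applies to $X$ because $X$ is an increasing $\omega$-union of finite $\emptyset$-definable sets, exactly as the standard cut is.'' That theorem is about expansions by the standard cut; nothing in it covers an arbitrary class of this shape, and your $X$ is a strictly stronger oracle than $\omega$: in $({\mathcal N},X)$ the standard cut itself becomes $\emptyset$-definable (as the range of the map sending $a$ to the internal cardinality of the coded set $X\cap[0,a)$), and so does the enumeration $n\mapsto x_n$; by choosing the sequence one can make an arbitrary $\omega$-sequence of elements of $M$ definable in $({\mathcal N},X)$. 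So ``$X$ never enlarges any $\dcl(a)$'' is a substantive statement about how $N$ sits over $M$, and your sketch of ``eliminating the predicate within a bounded region'' does not address the real difficulty you yourself identify, namely that quantifiers range over all of $N$ and membership in $X$ is queried at unbounded heights. Note also that your $X$ is a non-inductive class (an unbounded inductive class has the order type of the model), so no ${\sf PA}^*$-style coding inside $({\mathcal N},X)$ is available.

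In fact, what you are asserting is precisely the kind of statement this paper treats as delicate: neutrality of subsets of the prime submodel is proved here only when the ambient model is short recursively saturated (Proposition \ref{prop2}), and then by an automorphism argument, not by any analogue of Kanovei's theorem; whether neutral non-inductive classes exist beyond that situation is raised as Problem \ref{prob}. Since a cofinal extension of a prime model need not be short recursively saturated (e.g.\ a finitely generated one), your argument, if completed, would settle an open question, so the missing step is not a routine verification. The paper's own proof avoids the issue entirely by working with an inductive set: take a generic undefinable inductive $X\subseteq M$, use the Kotlarski--Schmerl lemma to get the unique $Y\subseteq N$ with $({\mathcal M},X)\prec({\mathcal N},Y)$, and then, given $a=t(b,Y)$, bound $a,b$ by some $c\in M$ and use ${\sf PA}^*$ in $({\mathcal M},X)$ to find $d\in M$ coding the map $y\mapsto t(y,X)$ below $c$; since $d\in M=\dcl(\emptyset)$, elementarity gives $a=(d)_b\in\dcl^{\mathcal N}(b)$. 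If you want to keep your approach, you would need an actual proof that some (necessarily carefully chosen) cofinal $\omega$-sequence in $M$ is neutral; as written, that is an unsupported leap.
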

\begin{proof} Let $X$ be an undefinable  inductive subset  of $M$. Such sets always exist; for example we can take $X$ to be a generic subset of $M$, see \cite{sim} or \cite[Chapter 6]{ks}.  

By the Kotlarski-Schmerl lemma \cite[Theorem 1.3.7]{ks}, there is a unique $Y\subseteq N$ such that $({\mathcal M},X)\prec ({\mathcal N},Y)$, and it follows  that $Y$ is an undefinable inductive subset of $N$. 

Suppose that for $a$ and $b$ in $N$, $a\in \dcl^{({\mathcal N},Y)}(b)$. Then, there is a Skolem term $t$ such that $a=t(b,Y)$.
Fix $c\in M$ such that $a, b<c$. There is $d\in M$ such that
\[({\mathcal M},X)\models \fa x,y<c [x=t(y,X)\iff x=(d)_y].\]
Since $({\mathcal M},X)\prec ({\mathcal N},Y)$, it follows that $a=(d)_b$, proving that $a\in \dcl(b)$.
\end{proof}
Let us note that in the proof of Proposition \ref{prop1}, the witness $Y$ to neutral expandability of ${\mathcal N}$ is a generic.

If ${\mathcal M}$ is \ct, and $X\subseteq M$ is coded in a \ct\ elementary end extension ${\mathcal N}$,   then $X$ can be extended to a generic subset of $N$. If $X$ is generic, and  $Y$ is a generic subset of $N$ extending $X$, then $({\mathcal M},X)\prec ({\mathcal N},Y)$ (see \cite[Corollary 6.2.8]{ks}). 

An extension ${\mathcal N}$ of a model ${\mathcal M}$ is {\it superminimal} if for each $a\in N\setminus M$, $N=\dcl(a)$. It follows directly from definitions that if $X$ is a neutral  subset of $M$, ${\mathcal N}$ is a superminimal elementary  extension of ${\mathcal M}$, and $({\mathcal M},X)\prec ({\mathcal N},Y)$, then $Y$ is a neutral subset of $N$. 

Theorem 5 of \cite{sch} implies that  any inductive subset of a \ct\ model can be coded in an elementary superminimal end extension. Hence, we have the following lemma.

\begin{lem}\label{lemB} If  $X$ is a generic neutral  subset of a \ct\ model ${\mathcal M}$, then there  is $({\mathcal N},Y)$ such that   $({\mathcal M},X)\prec_{\rm end} ({\mathcal N},Y)$, $Y$ is neutral in ${\mathcal N}$, and ${\mathcal N}$ is a superminimal extension of ${\mathcal M}$.
\end{lem}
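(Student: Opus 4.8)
The plan is to assemble the lemma from the two external facts already quoted in the excerpt — Schmerl's Theorem 5 of \cite{sch} and \cite[Corollary 6.2.8]{ks} — together with the observation, recorded immediately before the lemma, that superminimality of the underlying extension forces neutrality to lift to the expansion.

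First, since $X$ is generic it is in particular an undefinable \emph{inductive} subset of the countable model ${\mathcal M}$. By Theorem 5 of \cite{sch}, fix a countable elementary superminimal end extension ${\mathcal N}$ with ${\mathcal M}\prec_{\rm end}{\mathcal N}$ in which $X$ is coded; by the definition of superminimality, $N=\dcl^{\mathcal N}(a)$ for every $a\in N\setminus M$.

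Second, $X$ is now a subset of $M$ coded in the countable elementary end extension ${\mathcal N}$, which is exactly the hypothesis under which the discussion preceding the lemma (see \cite[Corollary 6.2.8]{ks}) produces a generic subset $Y$ of $N$ extending $X$, i.e.\ with $Y\cap M=X$. Since $X$ is generic and $Y$ is a generic subset of $N$ extending it, the same corollary gives $({\mathcal M},X)\prec({\mathcal N},Y)$; and since ${\mathcal M}\prec_{\rm end}{\mathcal N}$ and the orderings of $({\mathcal M},X)$ and $({\mathcal N},Y)$ are just those of ${\mathcal M}$ and ${\mathcal N}$, this elementary extension is in fact an end extension, so $({\mathcal M},X)\prec_{\rm end}({\mathcal N},Y)$.

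Finally, ${\mathcal N}$ is a superminimal elementary extension of ${\mathcal M}$, $X$ is neutral in ${\mathcal M}$, and $({\mathcal M},X)\prec({\mathcal N},Y)$, so by the remark just before the lemma $Y$ is neutral in ${\mathcal N}$ — concretely, if $a\in\dcl^{({\mathcal N},Y)}(b)$ and $b\notin M$ then already $a\in N=\dcl^{\mathcal N}(b)$, while if $b\in M$ then writing $a=t(b,Y)$ for a Skolem term $t$ of $\lpa\cup\{Y\}$ and using $({\mathcal M},X)\prec({\mathcal N},Y)$ forces $a=t^{({\mathcal M},X)}(b)\in M$, whence $a\in\dcl^{({\mathcal M},X)}(b)=\dcl^{\mathcal M}(b)\subseteq\dcl^{\mathcal N}(b)$. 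This gives the required $({\mathcal N},Y)$. The only point needing real care is the bookkeeping in the second step: verifying that Schmerl's ``coded in a superminimal end extension'' is in precisely the form needed to feed the KS forcing machinery so as to obtain a generic $Y$ with $Y\cap M=X$, and that the end-extension property is inherited by the expansions; the neutrality and superminimality assertions in the conclusion are then immediate.
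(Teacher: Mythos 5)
Your proposal is correct and follows essentially the same route as the paper: the paper also obtains the lemma by combining Schmerl's Theorem 5 (coding the inductive set $X$ in a countable superminimal elementary end extension), the fact that a set coded in a countable end extension extends to a generic $Y$ with $({\mathcal M},X)\prec({\mathcal N},Y)$ (Corollary 6.2.8 of \cite{ks}), and the remark that neutrality lifts along superminimal extensions. Your explicit two-case verification of that last remark is a harmless elaboration of what the paper leaves as ``follows directly from definitions.''
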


\begin{theorem}\label{olike} Every completion of {\sf PA} has $\om_1$-like neutrally expandable models.
\end{theorem}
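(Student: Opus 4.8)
The plan is to build ${\mathcal M}$ as the union of an elementary end-extension chain $\langle{\mathcal M}_\alpha:\alpha<\om_1\rangle$ of countable models of the given completion $T$, carrying along subsets $X_\alpha\subseteq M_\alpha$ so that each $X_\alpha$ is a generic, neutral, undefinable (hence inductive) subset of ${\mathcal M}_\alpha$, and $({\mathcal M}_\alpha,X_\alpha)\prec_{\rm end}({\mathcal M}_\beta,X_\beta)$ whenever $\alpha\le\beta$. Then ${\mathcal M}=\bigcup_{\alpha<\om_1}{\mathcal M}_\alpha$ together with $X=\bigcup_{\alpha<\om_1}X_\alpha$ should be the desired $\om_1$-like model of $T$ with its neutral witness.

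For the base step I would take ${\mathcal M}_0$ to be the prime model of $T$, which is countable, and let $X_0$ be a generic subset of $M_0$ (these exist by \cite{sim}; see \cite[Chapter 6]{ks}). Then $X_0$ is inductive and undefinable, and it is neutral because every subset of a prime model is neutral. At a successor step, given the generic neutral undefinable $X_\alpha$ on the countable ${\mathcal M}_\alpha$, I would apply Lemma \ref{lemB} to obtain $({\mathcal M}_{\alpha+1},X_{\alpha+1})$ with $({\mathcal M}_\alpha,X_\alpha)\prec_{\rm end}({\mathcal M}_{\alpha+1},X_{\alpha+1})$, with ${\mathcal M}_{\alpha+1}$ a (countable) superminimal elementary end extension of ${\mathcal M}_\alpha$ and $X_{\alpha+1}$ neutral in ${\mathcal M}_{\alpha+1}$; by the discussion preceding Lemma \ref{lemB}, $X_{\alpha+1}$ is produced by extending the generic $X_\alpha$ to a generic subset of $M_{\alpha+1}$, so $X_{\alpha+1}$ is again generic, hence also undefinable. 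At a limit stage $\delta$ I would simply take unions, ${\mathcal M}_\delta=\bigcup_{\alpha<\delta}{\mathcal M}_\alpha$ and $X_\delta=\bigcup_{\alpha<\delta}X_\alpha$: since $\langle({\mathcal M}_\alpha,X_\alpha):\alpha<\delta\rangle$ is an elementary chain, $({\mathcal M}_\delta,X_\delta)\models{\sf PA}^*$, so $X_\delta$ is inductive; each ${\mathcal M}_\alpha$ ($\alpha<\delta$) is an initial segment of ${\mathcal M}_\delta$; and it remains to check that $X_\delta$ is generic over ${\mathcal M}_\delta$ so that the recursion can continue past $\delta$.

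Granting the construction, I would finish the verification as follows. The chain is strictly increasing (each superminimal extension is proper), so ${\mathcal M}\models T$, $|M|=\aleph_1$, and each ${\mathcal M}_\alpha$ is an initial segment of ${\mathcal M}$; hence for $a\in M$, choosing $\alpha<\om_1$ with $a\in M_\alpha$, the set $\{x\in M:x<a\}$ lies in the countable $M_\alpha$, so ${\mathcal M}$ is $\om_1$-like. The union $({\mathcal M},X)$ is an elementary extension of every $({\mathcal M}_\alpha,X_\alpha)$, so $({\mathcal M},X)\models{\sf PA}^*$ and $X$ is a class. If $X$ were defined in ${\mathcal M}$ by $\vp(x,c)$ with $c\in M_\alpha$, then since $X\cap M_\alpha=X_\alpha$ and ${\mathcal M}_\alpha\prec{\mathcal M}$ we would get $X_\alpha=\{x\in M_\alpha:{\mathcal M}_\alpha\models\vp(x,c)\}$, contradicting the undefinability of $X_\alpha$; so $X$ is undefinable. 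Finally, to see $X$ is neutral, suppose $a\in\dcl^{({\mathcal M},X)}(b)$, say $a=t(b,X)$ for a Skolem term $t$ of $\lpa\cup\{X\}$. Choosing $\alpha$ with $a,b\in M_\alpha$ and using $({\mathcal M}_\alpha,X_\alpha)\prec({\mathcal M},X)$ to reflect the defining condition of $t$ down to $({\mathcal M}_\alpha,X_\alpha)$, we get $a\in\dcl^{({\mathcal M}_\alpha,X_\alpha)}(b)=\dcl^{{\mathcal M}_\alpha}(b)\subseteq\dcl^{\mathcal M}(b)$, using the neutrality of $X_\alpha$. The reverse inclusion being trivial, $\dcl^{({\mathcal M},X)}(b)=\dcl^{\mathcal M}(b)$ for all $b$, so $X$ is neutral and ${\mathcal M}$ is a neutrally expandable $\om_1$-like model of $T$.

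The step I expect to be the main obstacle is the limit stage: confirming that the union of the generic sets built so far is again generic over the union model, so that Lemma \ref{lemB} keeps applying. I would handle it with a density-transfer argument — any definable dense set of forcing conditions over ${\mathcal M}_\delta$ has its parameters in some ${\mathcal M}_\alpha$ with $\alpha<\delta$, hence restricts to a definable dense set of conditions over ${\mathcal M}_\alpha$, which the generic $X_\alpha$, and therefore $X_\delta\supseteq X_\alpha$, already meets. This reapplication of Lemma \ref{lemB} (together with checking that the lemma outputs a generic, not merely neutral, set) is the only place where genericity is essential; everything else is soft elementary-chain bookkeeping.
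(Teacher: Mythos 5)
Your construction is essentially the paper's own proof: start with a generic over the prime model, iterate Lemma \ref{lemB} along $\om_1$, take unions at limits, and handle limit-stage genericity by exactly the density-transfer-along-end-extensions argument you sketch (which the paper leaves implicit); the only cosmetic difference is that the paper deduces neutrality of the union from Proposition \ref{ind} together with the superminimality fact that every elementary submodel of the union is some ${\mathcal M}_\alpha$, whereas you reflect the $\dcl$-witness directly into some $({\mathcal M}_\alpha, X_\alpha)$. One small point to make explicit: to reapply Lemma \ref{lemB} at a limit $\delta$ you need $X_\delta$ to be neutral (and undefinable) in ${\mathcal M}_\delta$, not just generic, but your reflection argument for the final union applies verbatim at stage $\delta$, so this is already covered by what you wrote.
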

\begin{proof} Start with any generic subset $X_0$ of a prime model ${\mathcal M}_0$ and iterate Lemma \ref{lemB} along $\om_1$. That is, let $\{({\mathcal M}_\alpha, X_\alpha)\}_{\al\in \om_1}$ be a continuous elementary chain  such that ${\mathcal M}_{\al+1}$ is a superminimal elementary end extension of ${\mathcal M}_\al$, and $X_\al$ is undefinable and neutral in ${\mathcal M}_\al$ that is coded in ${\mathcal M}_{\al+1}$.  Let ${\mathcal N} = \bigcup\limits_{\alpha < \om_1} {\mathcal M}_\alpha$ and $X = \bigcup\limits_{\alpha < \om_1} X_\alpha$. Since each ${\mathcal M}_{\alpha + 1}$ is a superminimal extension of ${\mathcal M}_\al$, every elementary submodel of ${\mathcal N}$ is ${\mathcal M}_\alpha$ for some $\alpha < \om_1$. Further, for each $\alpha < \om_1$, $X_\alpha = X \cap M_\alpha$, so $({\mathcal M}_\alpha, X \cap M_\alpha) \prec ({\mathcal N}, X)$ for each $\alpha < \om_1$; hence $X$ is undefinable and inductive. Since, for every ${\mathcal K} \prec {\mathcal N}$, we  have  $({\mathcal K}, X \cap K) \prec ({\mathcal N}, X)$,   by Proposition \ref{ind} it follows that  $X$ is neutral.
\end{proof}

Lemma \ref{lemB} shows that if $\mathcal{M}$ is countable and $X \subseteq M$ is a neutral generic subset of $M$, then there is a countable $(\mathcal{N}, Y)$ such that $Y$ is a neutral generic subset of $N$ and $(\mathcal{M}, X) \prec_\text{end} (\mathcal{N}, Y)$.  We do not know whether every neutrally expandable model has a neutrally expandable elementary end extension. A positive answer to the following question would allow us to generalize Theorem \ref{olike} to $\kappa$-like modes.

\begin{prob}  Let   $X\subseteq M$ be inductive and neutral. Are there $\mathcal N$ and $Y\subseteq N$, such that $({\mathcal M},X)\prec_{\rm end} ({\mathcal N},Y)$ and $Y$ is neutral in $\mathcal N$? 
\end{prob}

\section{Neutral Classes}

 If $X\subseteq N$ is inductive and neutral, and  ${\mathcal M}$ is such that ${\mathcal M}\prec_{\rm cons}{\mathcal N}$, then $X\cap M$ is definable in ${\mathcal M}$ and it follows from Proposition \ref{ind} that $X$ is definable in ${\mathcal N}$. We will show that the same result holds under the assumption that $X$ is a class.

\begin{lem}\label{class} If ${\mathcal M}\prec {\mathcal N}$ and $X$ is a neutral class of ${\mathcal N}$, then $X\cap M$ is a class of  ${\mathcal M}$.
\end{lem}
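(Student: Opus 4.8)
The plan is to show that for each $a \in M$, the set $X \cap M$ below $a$ is definable in $\mathcal{M}$. Fix $a \in M$. Since $X$ is a class of $\mathcal{N}$ and $a \in N$, the set $X_a := \{x \in X : \mathcal{N} \models x < a\}$ is definable in $\mathcal{N}$, say by a formula $\vp(x, \bar{b})$ with parameters $\bar{b} \in N$. The difficulty is that $\bar{b}$ may lie outside $M$, so this definition does not immediately restrict to $\mathcal{M}$. The idea is to use neutrality to replace these parameters by ones in $M$: code the bounded piece $X_a$ as a single element of $N$ and show, via neutrality, that this code lies in $\dcl^{\mathcal{N}}(a) = \dcl^{\mathcal{M}}(a) \subseteq M$.

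First I would argue that $X_a$ is coded in $\mathcal{N}$: since $X_a$ is a definable (in $\mathcal{N}$) bounded subset of $\mathcal{N}$, there is $c \in N$ with $X_a = \{x < a : \mathcal{N} \models (c)_x = 1\}$ (using a fixed $\lpa$-definable pairing/sequence-coding, $(c)_x$ the $x$-th entry of $c$). We may take $c$ to be the least such code, so that $c$ is determined by $a$ and the formula $\vp(x,\bar b)$; in particular $c \in \dcl^{(\mathcal{N}, X)}(a)$, since the predicate $X$ lets us define $c$ from $a$ as ``the least $z$ such that for all $x < a$, $(z)_x = 1 \leftrightarrow x \in X$.'' Now neutrality of $X$ gives $\dcl^{(\mathcal{N},X)}(a) = \dcl^{\mathcal{N}}(a)$, so $c \in \dcl^{\mathcal{N}}(a)$. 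Because $\mathcal{M} \prec \mathcal{N}$ and $a \in M$, definable closure is computed the same way in both, so $\dcl^{\mathcal{N}}(a) = \dcl^{\mathcal{M}}(a) \subseteq M$; hence $c \in M$.

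Finally, with $c \in M$, the set $\{x \in M : \mathcal{M} \models x < a \wedge (c)_x = 1\}$ is definable in $\mathcal{M}$ using only the parameters $a, c \in M$. It remains to check this equals $(X \cap M)_a = \{x \in M : x < a,\ x \in X\}$; this is immediate since for $x \in M$ with $x < a$ we have $x \in X$ iff $x \in X_a$ iff $\mathcal{N} \models (c)_x = 1$ iff $\mathcal{M} \models (c)_x = 1$, the last step again by $\mathcal{M} \prec \mathcal{N}$. As $a$ was arbitrary, $X \cap M$ is a class of $\mathcal{M}$.

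The main obstacle is the parameter issue flagged above: one cannot directly push the $\mathcal{N}$-definition of the bounded piece of $X$ down to $\mathcal{M}$, and the whole point of the argument is that neutrality is exactly the tool that lets one do so, by packaging the bounded piece into a single $\dcl$-element and invoking $\dcl^{(\mathcal{N},X)} = \dcl^{\mathcal{N}}$. A minor technical point to be careful about is fixing once and for all the $\lpa$-coding of finite sequences/sets so that ``the least code'' is genuinely in the definable closure of $a$ alone.
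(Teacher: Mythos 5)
Your proposal is correct and is essentially the paper's own argument: the paper defines $m(x)=\min\{y: \forall z<x\,(z\in y \iff z\in X)\}$, which is exactly your ``least code of the bounded piece of $X$,'' well-defined because $X$ is a class, lying in $\dcl^{(\mathcal{N},X)}(a)=\dcl^{\mathcal{N}}(a)\subseteq M$ by neutrality and elementarity. Your write-up just spells out the coding and the final verification that the code defines $X\cap M$ below $a$, which the paper leaves implicit.
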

\begin{proof} Let $X$ be a neutral class of ${\mathcal N}$.  Let \[m(x)=\min\{y: \fa z<x (z\in y \iff z\in X)\}.\]
For each $c\in N$, $m(c)$ is well-defined in ${\mathcal N}$ because $X$ is a class, and for $c\in M$, $m(c)\in M$ because $X$ is a neutral class. 
\end{proof}

\begin{theorem}\label{thm1} Let $X$ be a class of a model ${\mathcal N}$. If there is ${\mathcal M}\prec {\mathcal N}$ such that ${\mathcal M}$ is a conservative extension of its prime submodel, then $X$ is a neutral class iff $X$ is 0-definable in ${\mathcal N}$.
\end{theorem}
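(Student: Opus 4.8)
The plan is to prove both directions. The backward direction is immediate: if $X$ is $0$-definable in ${\mathcal N}$, then for every $a\in N$ we have $\dcl^{({\mathcal N},X)}(a)=\dcl^{\mathcal N}(a)$ since the new predicate adds no definability power, so $X$ is neutral; this uses nothing about ${\mathcal M}$. The content is in the forward direction, so assume $X$ is a neutral class of ${\mathcal N}$ and that ${\mathcal M}\prec{\mathcal N}$ is a conservative extension of its prime submodel ${\mathcal M}_0$.

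First I would apply Lemma~\ref{class}: since $X$ is a neutral class of ${\mathcal N}$ and ${\mathcal M}\prec{\mathcal N}$, the set $X\cap M$ is a class of ${\mathcal M}$. Now I want to push this down to ${\mathcal M}_0$. Because ${\mathcal M}$ is a conservative extension of ${\mathcal M}_0$, every class of ${\mathcal M}$ meets ${\mathcal M}_0$ in a \emph{definable} subset of ${\mathcal M}_0$ — this is essentially the definition of conservativity (classes of the top model restrict to pieces that are already coded below). Moreover, ${\mathcal M}_0$ is prime, hence pointwise definable, so every definable subset of ${\mathcal M}_0$ is in fact $0$-definable: there is a formula $\vp(x)$ of $\lpa$ with ${\mathcal M}_0\models\vp(a)\iff a\in X\cap M_0$ for all $a\in M_0$.

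Next I would argue that this same $\vp$ defines $X$ throughout ${\mathcal N}$. The idea is that $X$, being a neutral class, cannot ``disagree'' with the $0$-definable set $\vp^{\mathcal N}$ on any element without violating neutrality. Concretely, suppose toward a contradiction there is $c\in N$ with $c\in X\iff {\mathcal N}\models\neg\vp(c)$. Consider the least such disagreement, or better, mimic the argument of Lemma~\ref{class}: let $e=\min\{y:\fa z<y\,(z\in y\iff (z\in X\leftrightarrow \neg\vp(z)))\}$ measuring the initial segment on which $X$ and $\vp^{\mathcal N}$ agree; if the disagreement set is nonempty and bounded it is coded by some element, and since $X$ is a class the ``symmetric difference below $a$'' is definable in $({\mathcal N},X)$ for each $a$, so its least element (if any) would lie in $\dcl^{({\mathcal N},X)}(0)=\dcl^{\mathcal N}(0)$, i.e.\ would be $0$-definable in ${\mathcal N}$ and hence, by $\Sigma_1$-completeness / overspill reflected down to ${\mathcal M}_0$, would already witness a disagreement inside $M_0$ — contradicting the choice of $\vp$. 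This forces $X=\vp^{\mathcal N}$, so $X$ is $0$-definable in ${\mathcal N}$.

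The main obstacle is the last step: transferring the agreement of $X$ with $\vp^{\mathcal N}$ from the prime submodel $M_0$ (where it holds by construction) up to all of $N$. The delicate point is that $X\cap M$ being definable in ${\mathcal M}$ does not by itself say $X\cap M$ equals $\vp^{\mathcal M}$ — one must use that ${\mathcal M}$ is a \emph{conservative} extension of ${\mathcal M}_0$ to identify the defining formula with one over the prime model, and then use neutrality together with the class property to show no new disagreement can appear above. I expect the cleanest route is to show directly that the ``first point of disagreement'' function would be a nonstandard-free definable element in $({\mathcal N},X)$ that is not in $\dcl^{\mathcal N}(0)$ unless the disagreement set is empty, exactly as in the proof of Lemma~\ref{class}, and then invoke conservativity one more time to rule out an unbounded disagreement set.
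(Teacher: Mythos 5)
Your proposal is correct and follows essentially the same route as the paper: Lemma \ref{class} plus conservativity and pointwise definability of the prime submodel yield a $0$-definable $\vp$ agreeing with $X$ on $M_0$, and then the least point of disagreement between $\vp$ and $X$ — which exists because $X$ is a class and is parameter-free definable in $({\mathcal N},X)$ — would by neutrality lie in $\dcl^{\mathcal N}(0)\subseteq M_0$, where no disagreement is possible. The only cleanup needed is in your last step: no $\Sigma_1$-completeness, overspill, or case distinction on boundedness of the disagreement set is required, since the paper simply observes that $e=\min\{y:\vp(y)\iff y\notin X\}$ is well-defined (any disagreement point bounds a coded, hence definable, nonempty piece of the symmetric difference) and cannot belong to $M_0$, directly contradicting neutrality.
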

\begin{proof}  Since all 0-definable sets are neutral, we only need to prove one  direction of the theorem.  Let ${\mathcal M}_0$ be the prime elementary submodel of ${\mathcal N}$, and suppose that $X$ is a neutral class of ${\mathcal N}$. By Lemma \ref{class}, $X\cap M$ is a class of ${\mathcal M}$, and, since ${\mathcal M_0}\prec_{\rm cons} {\mathcal M}$, it follows that $X\cap M_0$ is 0-definable. 
Let $\vp(x)$ be a formula defining $X\cap M_0$ in ${\mathcal M}_0$. If $\vp({\mathcal N})\not=X$,  then
\[e=\min\{y : \vp(y) \iff y \not\in X\}\] is well-defined in ${\mathcal N}$  because $X$ is a class, and $e>M_0$, which contradicts neutrality of $X$. 
\end{proof}

For the  proofs of the next result and the results in Section \ref{s2} we will use Gaifman's minimal types. See Chapter 3 of \cite{ks}, in particular Theorem 3.2.10, for all necessary background.

Since every \rs\ model realizes minimal types and elementary end extensions generated by an element realizing a minimal type are conservative,  we have the following corollary.

\begin{cor}\label{cor 2}
If ${\mathcal N}$ has a \rs\ elementary submodel, and $X$ is a class of ${\mathcal N}$, then  $X$ is a neutral class iff  $X$ is 0-definable. 
\end{cor}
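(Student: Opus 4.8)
The plan is to deduce the corollary from Theorem~\ref{thm1}: it is enough to produce an elementary submodel ${\mathcal M}\prec{\mathcal N}$ that is a conservative extension of its prime submodel, and then quote the theorem. So let ${\mathcal K}\prec{\mathcal N}$ be \rs, and let ${\mathcal M}_0$ be the prime submodel of ${\mathcal N}$; by uniqueness of prime submodels, ${\mathcal M}_0$ is also the prime submodel of ${\mathcal K}$. Since ${\mathcal M}_0$ is pointwise definable, a Gaifman minimal type over ${\mathcal M}_0$ amounts to a complete type over $\emptyset$, and one can be chosen recursive; being consistent with the complete theory $\Th({\mathcal N})$, it is finitely satisfiable in ${\mathcal K}$, hence realized there by recursive saturation, say by $a\in K$. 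This is precisely the content of the fact recalled just before the corollary, that every \rs\ model realizes a minimal type over its prime submodel.

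Now set ${\mathcal M}=\dcl^{\mathcal K}(a)$, the Skolem hull of $\{a\}$ in ${\mathcal K}$; this is an elementary submodel of ${\mathcal K}$, hence ${\mathcal M}\prec{\mathcal N}$. Because ${\mathcal M}_0$ is pointwise definable we have $M_0\subseteq\dcl^{\mathcal K}(a)$, so ${\mathcal M}$ is exactly the model generated over ${\mathcal M}_0$ by the single element $a$, and ${\mathcal M}_0$ is its prime submodel. Since $a$ realizes a minimal type over ${\mathcal M}_0$, the extension ${\mathcal M}_0\prec{\mathcal M}$ is an elementary end extension and it is conservative, i.e.\ ${\mathcal M}_0\prec_{\rm cons}{\mathcal M}$ — this is the second fact recalled before the corollary. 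Thus ${\mathcal M}$ is a conservative extension of its prime submodel, so Theorem~\ref{thm1} applies to ${\mathcal M}\prec{\mathcal N}$ and gives that a class $X$ of ${\mathcal N}$ is a neutral class iff it is 0-definable.

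Once Theorem~\ref{thm1} is available, this is really just an assembly of standard facts, so I do not expect a genuine obstacle; the one place where the hypothesis does actual work is the step that extracts, from the \rs\ submodel ${\mathcal K}$, an element realizing a minimal type over the prime model. The substantive background behind that step is the existence of a recursive minimal type for every completion of {\sf PA} (see \cite[Chapter~3]{ks}, in particular around Theorem~3.2.10), which is exactly what lets recursive saturation force the type to be realized. I would note, finally, that countability of ${\mathcal N}$ plays no role, and nothing about ${\mathcal N}$ is used beyond the presence of a \rs\ elementary submodel.
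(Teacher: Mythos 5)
Your argument is correct and follows essentially the same route as the paper's own (one-line) justification: use the recursively saturated submodel to realize a minimal type, observe that the submodel generated by that realization is a conservative extension of the prime model, and then quote Theorem~\ref{thm1}. One minor phrasing caveat: the complete minimal type cannot itself be chosen recursive (it contains the full, non-recursive theory); the precise background fact is that there is a recursive \emph{partial} type all of whose completions are minimal, which is exactly what recursive saturation needs and is the same fact from \cite[Chapter 3]{ks} that the paper invokes.
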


By a result of Kotlarski, Krajewski, and Lachlan, each \ct\ \rs\ model of {\sf PA} has a full satisfaction class \cite{kkl}. Satisfaction classes are never definable, and they may not be classes; however, Smith showed \cite[Theorem 2.10]{smi} that if $S$ is a full satisfaction class for a model ${\mathcal M}$, then ${\mathcal M}$ has an undefinable class that is definable in $({\mathcal M},S)$. This result and Theorem \ref{thm1} imply that  no full satisfaction class is neutral.

The following terminology was introduced by Smory\'nski \cite{smo}. A model $\mathcal M$ is {\it short} if for some $a\in M$, $\dcl(a)$ is cofinal in $M$. A model that is not short is {\it tall}. A model $\mathcal M$ is {\it short recursively saturated} if it realizes all finitely realizable recursive types that contain a formula $x<b$ for some $b\in \mathcal M$. It is easy to see that every  tall short \rs\ model is \rs. 

Let  ${\mathcal N}$ be \rs.  For $a\in N$ let $I({\mathcal N};a)$ be the elementary submodel of ${\mathcal N}$ whose domain is $\{x: \ex b\in \dcl(a)\ {\mathcal N}\models x<b\}$. All models of the form $I({\mathcal N};a)$ are short \rs.
Every short, short \rs\ model is of the form $I({\mathcal N};a)$ for a \rs\ $\mathcal N$. This was proved for \ct\ models in \cite[Theorem C]{smo}, and in full generality in \cite[Theorem 5.1]{kos}.

For every parameter free complete type $p(x)$ that is realized  in ${\mathcal N}$ by some $a>I({\mathcal N};0)$, the set of its realizations of $p(x)$ in ${\mathcal N}$ is coinitial with  $I({\mathcal N};0)$. Hence,  Corollary \ref{cor 2} applies to all models $I({\mathcal N};a)$, for $a>I({\mathcal N};0)$.  Models of the form $I({\mathcal N};0)$ are exceptional. We know from Proposition \ref{prop1} that $I({\mathcal N};0)$ is neutrally expandable. In fact, we can show more: if $I({\mathcal N};0)$ is nonstandard, it is an example of a model that has a neutral class that is not inductive.

\begin{prop}\label{prop2} Let ${\mathcal N}$ be short \rs\ and let ${\mathcal N}_0$ be the prime elementary submodel of $\mathcal N$. Then, every subset of $\mathcal N_0$ is neutral in $\mathcal N$. 
\end{prop}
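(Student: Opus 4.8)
The plan is to show that if $A \subseteq N_0$, then $A$ does not enlarge any $\dcl$-closure, i.e. for $a \in N$, $\dcl^{(\mathcal N, A)}(a) = \dcl^{\mathcal N}(a)$. Since $\mathcal N$ is short, fix $e \in N$ with $\dcl^{\mathcal N}(e)$ cofinal in $N$; replacing $e$ by a pair coding both $e$ and a given parameter $a$, it suffices to prove that $\dcl^{(\mathcal N,A)}(e) = \dcl^{\mathcal N}(e)$ for every $e$ whose Skolem closure is cofinal, since for arbitrary $a$ we can absorb $a$ into such an $e$. So suppose $b = t(e, A)$ for some Skolem term $t$ of $\lpa \cup \{A\}$; I want to conclude $b \in \dcl^{\mathcal N}(e)$.

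The key point is that $A$ lives below $N_0$, and $N_0 = \dcl^{\mathcal N}(0)$ is an initial segment whose elements are all $0$-definable. The idea is this: the formula (in the language with the predicate for $A$) computing $b = t(e, A)$ only ``reads'' $A$ on the initial segment determined by the computation of $t$; since $N_0$ is an initial segment of $N$ and $A \subseteq N_0$, and since $N_0$ is \emph{pointwise $0$-definable}, the characteristic function of $A$ restricted to any standard-parameter-definable bounded interval inside $N_0$ is itself coded by a $0$-definable element. More precisely: for the computation of $t(e,A)$ we only need to know $A \cap [0, c)$ for some $c \in \dcl^{\mathcal N}(e)$; intersecting with $N_0$, and using that $A \cap [0,c) = A \cap [0, c) \cap N_0$, we reduce to knowing $A \cap [0, \min(c, \sup N_0))$. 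Now here is where short recursive saturation enters through Proposition \ref{prop1}'s circle of ideas: since $\mathcal N$ is short \rs\ with prime submodel $\mathcal N_0$, we have $\mathcal N_0 \prec_{\rm cof} \mathcal N$ (every short \rs\ model is a cofinal extension of its prime submodel — this is exactly the $I(\mathcal N;0)$ situation), hence $\mathcal N_0 \prec_{\rm cons} \mathcal N$ only fails, but cofinality is what we need: every bounded-below-$c$ piece of $A$ with $c \in N_0$ is $0$-definable because $A \subseteq N_0$ and $N_0$ is pointwise $0$-definable, so for each $c \in N_0$ there is $d_c \in N_0 = \dcl^{\mathcal N}(0)$ with $(z \in A \iff (d_c)_z)$ for all $z < c$.

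Assembling this: given $b = t(e,A)$, choose via cofinality of $\dcl^{\mathcal N}(e)$ an element $c \in \dcl^{\mathcal N}(e)$ bounding all elements of $N$ queried in the computation, and note only queries below $\sup N_0$ matter since $A \subseteq N_0$; if $c \geq$ every element of $N_0$ we replace $c$ by some $c' \in N_0$ large enough (this uses $\mathcal N_0 \prec_{\rm cof}\mathcal N$ is false — $N_0$ is not cofinal; rather $N_0$ is a bounded initial segment, so all of $A$ is below any single $c'' > N_0$, and since $\dcl^{\mathcal N}(e)$ is cofinal we may take $c'' \in \dcl^{\mathcal N}(e)$ with $c'' > N_0$). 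Then $A \cap [0, c'') = A$ entirely. Now apply Proposition \ref{prop1}: $\mathcal N$ \emph{is} a cofinal extension of its prime model $\mathcal N_0$, and the computation of $t(\cdot, A)$ with $A$ bounded reduces, just as there, to a $0$-definable Skolem term. In detail: there is $d \in N_0$ coding $A$ as a bounded set, i.e. $(z \in A \iff (d)_z)$ for all $z$, because $A \subseteq N_0$, $N_0$ is a bounded initial segment below some $c'' \in \dcl^{\mathcal N}(e)$, and $N_0$ is pointwise $0$-definable so this $d$ is $0$-definable; then $b = t(e, A) = s(e, d)$ for the corresponding $\lpa$-Skolem term $s$, and $d \in \dcl^{\mathcal N}(0) \subseteq \dcl^{\mathcal N}(e)$, so $b \in \dcl^{\mathcal N}(e)$, as required. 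Pulling back through the reduction, $A$ is neutral.

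The main obstacle, and the point needing the most care, is the reduction from ``$A$ is queried at unboundedly many points of $N$'' to ``$A$ is coded by a single standard-parameter-definable element.'' This works \emph{because} $A \subseteq N_0$ and $N_0$ is a \emph{bounded} initial segment (short \rs\ models are short, so $\dcl^{\mathcal N}(0)$ is cofinal in $\mathcal N_0$ but $\mathcal N_0$ itself sits below some $c'' \in N$) \emph{and} pointwise $0$-definable — so ``all of $A$'' fits below a single Skolem term value and is itself coded by a $0$-definable code. One must verify that a nonstandard cut $N_0$ of a short model really is bounded in $\mathcal N$ when $\mathcal N$ is tall, and handle the short-$\mathcal N$ case (where $N_0$ could be cofinal) separately — but if $\mathcal N$ is short and short \rs\ then $\mathcal N$ is \rs\ of the form $I(\mathcal K;a)$, and in the genuinely short situation $N = N_0 = \dcl^{\mathcal N}(0)$'s closure anyway makes every subset of $N_0$ trivially handled by the cofinal-computation argument. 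The careful bookkeeping of which interval of $A$ is "seen" by the Skolem computation is the crux; everything else is an application of the idea already used in Proposition \ref{prop1}.
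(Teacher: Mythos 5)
Your argument breaks down at its structural premises about $\mathcal N_0$. First, $N_0=\dcl^{\mathcal N}(0)$ is \emph{not} an initial segment of $N$ when $\mathcal N$ is nonstandard: short recursive saturation lets you realize the recursive type $\{x<b\}\cup\{x\neq t(0): t \text{ a Skolem term}\}$ below any nonstandard $b\in N_0$, so there are undefinable elements below definable ones. Second, $N_0$ need not be bounded in $N$, and in the very case the paper applies this proposition to (a short recursively saturated $\mathcal N$ that is a \emph{cofinal} extension of its prime submodel, e.g.\ $I(\mathcal K;0)$ for $\mathcal K$ recursively saturated), $N_0$ is cofinal in $N$, so there is no $c''>N_0$ at all; your handling of this case is the incoherent passage in the middle of the proof. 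Relatedly, ``short recursively saturated'' does not imply ``short'': every recursively saturated model is tall and short recursively saturated. Third, and most fatally, even if $N_0$ were bounded, an arbitrary $A\subseteq N_0$ is not coded in $\mathcal N$ — a countable model has only countably many coded bounded sets, while $N_0$ has continuum many subsets — and it is certainly not coded by a $0$-definable element: that would make $A$ itself $0$-definable, whereas the whole point of the proposition is to produce \emph{undefinable} neutral (non-inductive) classes. Pointwise $0$-definability of $N_0$ gives definability of its elements, not codes for its arbitrary subsets. Finally, the claim that the ``computation'' of $t(e,A)$ only queries $A$ on an interval bounded in $\dcl^{\mathcal N}(e)$ is unjustified for an arbitrary set $A$; that kind of locality is what genericity/forcing buys in Proposition \ref{prop1}, and it is not available here.

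The correct mechanism is entirely different and is an automorphism argument: given $a\notin\dcl^{\mathcal N}(b)$ with $a,b\notin N_0$, pass to a countable short recursively saturated $(\mathcal N',X)\prec(\mathcal N,X)$ containing $a,b$; if the model is tall (hence recursively saturated) take an automorphism fixing $b$ and moving $a$. Any automorphism fixes $N_0$ pointwise, hence fixes $X\subseteq N_0$ setwise, so it is an automorphism of the expanded structure, and therefore $a\notin\dcl^{(\mathcal N,X)}(b)$; the short case is handled by working inside a countable recursively saturated elementary end extension. Your coding strategy cannot be repaired to yield this, because it would prove the (false) statement that every subset of $N_0$ is definable in $\mathcal N$.
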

\begin{proof}  Let $X$ be a subset of $N_0$ and let  $a,b\in  N\setminus N_0$ be such that $a\notin \dcl(b)$.  If ${\mathcal N}$ is un\ct, then instead of ${\mathcal N}$ we can take  a \ct\ short \rs\ model ${\mathcal N}'$ such that $a,b\in N'$ and $({\mathcal N}',X)\prec ({\mathcal N},X)$. So let us assume that ${\mathcal N}$ is \ct.  

If $\mathcal N$ is tall, and hence \rs, then there is an \au\ $f$ of ${\mathcal N}$ that fixes $b$ and moves $a$. Since $f$  fixes $N_0$ pointwise and fixes $I({\mathcal N};0)$ setwise,  $a\notin \dcl^{({\mathcal N},X)}(b)$. If $\mathcal N$  is short, then it has a \ct\  \rs\ elementary end extension $\mathcal K$, and we can repeat the same argument working in $\mathcal K$.
\end{proof}

In Proposition \ref{prop2}, instead of requiring that $\mathcal N$ is short \rs\  we could assume that for all $a,b\in N$ such that $a\notin \dcl(b)$, there is an \au\ of $\mathcal N$ fixing $b$ and moving $a$. Models with that property do not have to be short \rs.

For an example of a non-prime model that has a neutral class that is not inductive, we can apply Proposition \ref{prop2} to a \ct\ short \rs\ model $\mathcal N$ that is a cofinal extension of ${\mathcal N}_0$ and any unbounded $X\subseteq N_0$ of order type $\om$.

\begin{prob}\label{prob}  Are there neutral non-inductive classes other than those given by Proposition \ref{prop2}? \end{prob}

Let ${\mathcal K}$ and ${\mathcal N}$ be \rs\ models such that ${\mathcal K}\prec_{\rm cof}{\mathcal N}$, and such that the standard cut is strong in ${\mathcal K}$ but not in ${\mathcal N}$. Let ${\mathcal K}_0$ be the prime elementary submodel of $\mathcal K$, and let $X$ be an unbounded subset of $K_0$ of order type $\om$. Then, $X$ is a neutral class of $I({\mathcal K};0)$ and $I({\mathcal N};0)$, and since $\om$ is definable (by the same definition) in both models, it follows that $(I({\mathcal N};0),X)$ is not an elementary extension of $(I({\mathcal K};0),X)$. This shows that Proposition \ref{ind} does not hold for classes, but still we have a weaker version of it that is a corollary of Lemma \ref{class} and its proof.

\begin{prop}\label{prop3} If $X$ is a neutral unbounded class of ${\mathcal N}$, then for each ${\mathcal K}\prec {\mathcal N}$, $X \cap K$ is an unbounded class of ${\mathcal K}$.
\end{prop}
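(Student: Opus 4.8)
The plan is to derive the proposition from Lemma \ref{class} together with a single further use of neutrality, this time to handle unboundedness. Fix ${\mathcal K}\prec {\mathcal N}$. A neutral unbounded class is in particular a neutral class, so Lemma \ref{class}, applied with ${\mathcal M}={\mathcal K}$, immediately gives that $X\cap K$ is a class of ${\mathcal K}$. Concretely, the function $m(x)=\min\{y:\fa z<x(z\in y\iff z\in X)\}$ from the proof of that lemma is the witness: it is total in ${\mathcal N}$ because $X$ is a class, and it maps $K$ into $K$ because $X$ is neutral, so its restriction to $K$ codes the bounded pieces of $X\cap K$ inside ${\mathcal K}$. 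Thus the only new content is that $X\cap K$ is unbounded in ${\mathcal K}$.

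For unboundedness I would use the ``next element of $X$'' function $g(x)=\min\{y: y\in X\text{ and }y>x\}$. Since $X$ is unbounded in ${\mathcal N}$, $g$ is definable in $({\mathcal N},X)$ and total on $N$. Given $c\in K$, we have $g(c)\in\dcl^{({\mathcal N},X)}(c)$; neutrality of $X$ gives $\dcl^{({\mathcal N},X)}(c)=\dcl^{\mathcal N}(c)$, and ${\mathcal K}\prec {\mathcal N}$ forces $\dcl^{\mathcal N}(c)\subseteq K$. Hence $g(c)\in X\cap K$ and $g(c)>c$, so $X\cap K$ is cofinal in ${\mathcal K}$, as required.

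The step needing care --- though I do not expect it to be a real obstacle --- is that ${\mathcal K}$ need not be an initial segment of ${\mathcal N}$: there can be elements of $X$ below a given $c\in K$ that do not themselves lie in $K$, so one cannot reason naively about $\{z<c: z\in X\}$ as an object of ${\mathcal K}$. The function $g$ avoids this because $g(c)$ is a \emph{single} element of $X$, determined by $c$ in the expanded structure, so neutrality alone drops it into $K$. This is exactly the move used in the proof of Lemma \ref{class} --- apply neutrality to a Skolem term of $({\mathcal N},X)$ --- which is why the proposition is a corollary of that lemma and its proof.
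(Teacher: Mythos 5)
Your proof is correct and is essentially the paper's own argument: the paper leaves Proposition \ref{prop3} as ``a corollary of Lemma \ref{class} and its proof,'' and the intended content is exactly what you supply --- the class part from Lemma \ref{class}, and unboundedness by applying neutrality to the $({\mathcal N},X)$-definable next-element-of-$X$ function, whose value at $c\in K$ lands in $\dcl^{\mathcal N}(c)\subseteq K$. Nothing further is needed.
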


Proposition \ref{prop3} may be prove to be useful, but it may also turn out to hold trivially if the answer to Problem \ref{prob} is negative.

Let us finish this part with an obvious question.

\begin{prob} Are there neutrally expandable models other than those shown in this section?
\end{prob}

\subsection{Neutrality is not first-order}

If $X$ is a neutral subset of a model $\mathcal M$, and $({\mathcal M}, X)\prec ({\mathcal N}, Y)$, then $Y$ may not be neutral. An easy example is given by the standard model ${\mathbb N}$, an undefinable set of natural numbers $X$, and $({\mathcal N},Y)$ that is a \rs\ elementary extension of $({\mathbb N},X)$. $Y$ is an inductive undefinable subset of $N$, but it is not neutral, because ${\mathcal N}$, being \rs, does not have neutral classes.  

Another set of examples is given by the tall neutrally expandable models constructed in the proof of Theorem \ref{olike}. It is not difficult to see that every tall model has a cofinal elementary extension that is \rs. Hence, each model ${\mathcal M}_\lambda$, for limit $\lambda<\om_1$, has a cofinal \ct\ \rs\ extension ${\mathcal N}_\lambda$. By the Kotlarski-Schmerl lemma, there is a unique $Y_\lambda$, such that $({\mathcal M}_\lambda, X_\lambda)\prec ({\mathcal N}_\lambda, Y_\lambda)$. While $X_\lambda$ is neutral in ${\mathcal M}_\lambda$,  $Y_\lambda$ is not neutral in ${\mathcal N}_\lambda$.

Dolich has asked whether there is a theory $T$ such that for every \rs\ model ${\mathcal M}$ (of any \fo\ theory), and any set $X$, $X$ is neutral iff $({\mathcal M},X)\models T$. We provide a (partial) negative answer here.

\begin{cor}There is no  theory $T$ extending $\mathsf{PA}$ such that for any \rs\ ${\mathcal M}$ and any set $X$, $X$ is neutral iff $({\mathcal M}, X) \models T$.
\end{cor}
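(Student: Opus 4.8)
The plan is to assume such a $T$ exists and derive a contradiction by producing a single \emph{complete} theory $T^{*}$ in the language $\lpa\cup\{X\}$ that has a countable \rs\ model in which $X$ names a neutral set and also a countable \rs\ model in which $X$ names a set that is not neutral. Granting such a $T^{*}$, the contradiction is immediate: pick countable \rs\ $(\mathcal{M}_1,X_1)\models T^{*}$ with $X_1$ neutral in $\mathcal{M}_1$ and countable \rs\ $(\mathcal{M}_2,X_2)\models T^{*}$ with $X_2$ not neutral in $\mathcal{M}_2$; by hypothesis $(\mathcal{M}_1,X_1)\models T$, so completeness of $T^{*}$ gives $T\subseteq T^{*}$, hence $(\mathcal{M}_2,X_2)\models T$, and then the hypothesis on $T$ forces $X_2$ to be neutral in $\mathcal{M}_2$ --- a contradiction.

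For the neutral model I would work inside a countable \rs\ model $\mathcal{M}_1$ with prime submodel $\mathcal{M}_{1,0}=\dcl^{\mathcal{M}_1}(0)$, fix a recursive enumeration $\vp_0,\vp_1,\dots$ of the $\lpa$-formulas in two free variables, and build $X_1=\{a_0<a_1<\cdots\}\subseteq M_{1,0}$ of order type $\om$ greedily: having chosen $a_i$, let $a_{i+1}$ be any element of $M_{1,0}$ above $a_i$ that differs from the unique element defined by $\vp_j(x,a_i)$ for every $j\le i$ for which such a unique element exists; this is possible because $\mathcal{M}_{1,0}$, being an infinite model of $\mathsf{PA}$, has infinitely many elements above $a_i$ while only finitely many are forbidden. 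Since $\mathcal{M}_1$ is \rs\ it is short \rs, so by Proposition~\ref{prop2} every subset of $M_{1,0}$, in particular $X_1$, is neutral in $\mathcal{M}_1$. I then set $T^{*}=\Th(\mathcal{M}_1,X_1)$ and take $(\mathcal{M}_2,X_2)$ to be a countable \rs\ model of $T^{*}$ --- one exists since $T^{*}$ is a consistent theory in a countable language --- so that the reduct $\mathcal{M}_2$ is also \rs.

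The heart of the argument --- and the step I expect to require the most care --- is to show $X_2$ is not neutral in $\mathcal{M}_2$, which I would do by realizing a recursive type. Let $p(u,v)$ be the set of formulas $X(u)$, $X(v)$, $u<v$, $\neg\exists w\,(X(w)\wedge u<w<v)$ together with $\exists!x\,\vp_n(x,u)\to\neg\vp_n(v,u)$ for every $n$; this is a recursive type over $\emptyset$. The greedy construction is designed exactly so that $p$ is finitely satisfiable in $(\mathcal{M}_1,X_1)$: a finite $p_0\subseteq p$ involves only $\vp_{n_1},\dots,\vp_{n_k}$, and $u=a_i$, $v=a_{i+1}$ works for any $i\ge\max\{n_1,\dots,n_k\}$. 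For each finite $p_0$, ``$\exists u\exists v\bigwedge p_0$'' is a single $\lpa\cup\{X\}$-sentence, so $(\mathcal{M}_2,X_2)\equiv(\mathcal{M}_1,X_1)$ transfers finite satisfiability; recursive saturation then realizes $p$ by some $u_0,v_0\in M_2$. Now $v_0$ is the unique element satisfying $X(v)\wedge u_0<v\wedge\neg\exists w(X(w)\wedge u_0<w<v)$, so $v_0\in\dcl^{(\mathcal{M}_2,X_2)}(u_0)$, while the remaining clauses of $p$ say precisely that $v_0\notin\dcl^{\mathcal{M}_2}(u_0)$. Hence $X_2$ is not neutral, completing the proof. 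The two places I would check most carefully are the finite satisfiability of $p$ coming out of the construction, and the fact that the inclusion $X_1\subseteq M_{1,0}$ is used only to invoke Proposition~\ref{prop2} on the neutral side and plays no part in forcing non-neutrality of $X_2$.
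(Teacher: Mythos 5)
Your proof is correct, and it takes a genuinely different route from the paper's. The paper argues inside a single \rs\ model: by Corollary~\ref{cor 2}, neutral classes of a model with a \rs\ elementary submodel are exactly the 0-definable sets, so assuming such a $T$ exists one takes a \rs\ $\mathcal M$ whose standard system codes $T$ and uses resplendence to expand it to a model of $T$ together with ``$X$ is a class'' and ``$X$ is not defined by $\vp$'' for each standard $\vp$; the resulting $Y$ is an undefinable neutral class, contradicting Corollary~\ref{cor 2}. You instead exploit the failure of neutrality to transfer along elementary equivalence: Proposition~\ref{prop2} (which does apply, since a countable \rs\ model is in particular short \rs) makes your $X_1\subseteq M_{1,0}$ neutral, your greedy choice of the $a_i$ correctly secures finite satisfiability of the recursive type $p$ --- note that $p$ itself is necessarily omitted in $({\mathcal M}_1,X_1)$, since every element of $M_{1,0}$ is 0-definable, which is consistent with your argument because only finite satisfiability is transferred --- and recursive saturation of a second countable model of $\Th({\mathcal M}_1,X_1)$ then realizes $p$, yielding $v_0\in\dcl^{({\mathcal M}_2,X_2)}(u_0)\setminus\dcl^{{\mathcal M}_2}(u_0)$ and hence the contradiction. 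What your route buys: it avoids resplendence and standard-system considerations and the machinery behind Corollary~\ref{cor 2} (minimal types, conservative extensions), needing only Proposition~\ref{prop2} plus the existence of countable \rs\ models of consistent countable theories, and it never uses the hypothesis $T\supseteq\mathsf{PA}$. What the paper's route buys: it is shorter given the results already established, the failure is witnessed inside one fixed \rs\ model, and since the expansion it constructs satisfies ``$X$ is a class'' it rules out even a $T$ that is only required to capture neutrality among class expansions, whereas your non-neutral witness $X_2$ need not be a class.
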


\begin{proof} Suppose $T$ is such a theory. Corollary \ref{cor 2} implies that if ${\mathcal M}$ is a countable recursively saturated model of {\sf PA}, then for any subset $X$ of ${\mathcal M}$,
\begin{displaymath} 
\textup{$X$ is 0-definable iff }({\mathcal M}, X) \models T + \textup{$X$ is a class}.
\end{displaymath}
Let ${\mathcal M}$ be a \rs\ model such that $T$ is coded by a set in the standard system of ${\mathcal M}$. Let $S$ be the theory 
$T$  extended by ``$X$  is a class"  and  the sentences of the form $\ex x \lnot [x\in X \iff \vp(x)]$, for all formulas  $\vp(x)$ of the language of ${\sf PA}$.
Since ${\mathcal M}$ is resplendent and $S$ is in the standard system of ${\mathcal M}$, one can find an expansion $({\mathcal M}, Y) \models S$ (see \cite[Theorem  1.9.3]{ks}). Then,  $Y$ is a neutral class and is not  0-definable, which is a contradiction.
\end{proof}

\section{A-Neutral Sets}\label{s2}

\begin{definition} Let $A \subseteq M$. A subset $X$ of ${\mathcal M}$ is called \emph{$A$-neutral} if, for all $a, b \in A$, $a \in \dcl^{\mathcal M}(b)$ iff $a \in \dcl^{({\mathcal M}, X)}(b)$.
\end{definition}

The two theorems below are a strengthening of the results we previously obtained for finite $A$, and for coded
$\om$-sequences. The proofs are due to Jim Schmerl, and they are included here with his kind permission. 

\begin{theorem} Let  ${\mathcal M}$ be countable and recursively saturated and let $A$ be a bounded subset of $M$. Then ${\mathcal M}$ has an inductive, undefinable $A$-neutral subset $X$.
\end{theorem}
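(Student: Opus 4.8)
The plan is to build $X$ by a back-and-forth construction inside ${\mathcal M}$, exploiting recursive saturation and resplendence. Since $A$ is bounded, fix $b_0 \in M$ with $A < b_0$. The key idea: we want an inductive set $X$ such that the expansion $({\mathcal M}, X)$ does not enlarge the definable closure of any element of $A$. Because ${\mathcal M}$ is recursively saturated, hence resplendent, it suffices to find a recursive (or arithmetic) consistent theory $S$ in the language $\lpa \cup \{X\}$ such that: (i) $S$ proves $\mathsf{PA}^*$ (so $X$ is inductive), (ii) $S$ proves each sentence $\exists x\,\lnot[x \in X \iff \vp(x)]$ for $\vp \in \lpa$ (so $X$ is undefinable), and (iii) $S$ proves, for each pair of $\lpa$-Skolem terms and each formula cutting below $b_0$, enough to guarantee that any $a = t(b, X)$ with $a, b < b_0$ already lies in $\dcl^{\mathcal M}(b)$. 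If such an $S$ lies in the standard system of ${\mathcal M}$, resplendence gives $({\mathcal M}, Y) \models S$, and $Y$ is the desired $A$-neutral (indeed, $b_0$-neutral) inductive undefinable set.

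The first step is to make precise what (iii) should assert. For $a, b < b_0$, $a \in \dcl^{({\mathcal M},X)}(b)$ means $a = t(b, X)$ for some Skolem term $t$ of $\lpa \cup \{X\}$; equivalently $({\mathcal M},X) \models \theta(a,b)$ for some $\lpa\cup\{X\}$-formula $\theta$ defining a function at $b$. The requirement is that the ``$<b_0$-restricted'' behavior of any such definable function is captured by an element $d \in M$, as in the proof of Proposition \ref{prop1}: there should be $d$ with $({\mathcal M},X) \models \fa x, y < b_0\,[x \text{ is the unique } X\text{-}\theta\text{-value at } y \iff x = (d)_y]$. So $S$ should contain, for each $\lpa\cup\{X\}$-formula $\theta(x,y)$, a sentence asserting $\exists d$ coding the graph of (the restriction below $b_0$ of) the partial function defined by $\theta$. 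This is an arithmetic scheme once $b_0$ is named by a constant, so $S$ is a recursive theory in $\lpa \cup \{X, b_0\}$; its restriction to $\lpa \cup \{X\}$ with $b_0$ existentially quantified is in the standard system of ${\mathcal M}$.

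The second, and main, step is proving $S$ is consistent — equivalently, realized in some model, which by recursive saturation of ${\mathcal M}$ we can take to be an elementary extension or $({\mathcal M},Y)$ directly. Here I would follow Schmerl's route: start from a generic (inductive, undefinable) $X_0$ in a countable prime model ${\mathcal M}_0 \prec {\mathcal M}$ as in Proposition \ref{prop1}, where the coding condition (iii) holds automatically because $\dcl^{{\mathcal M}_0}(b)$ already exhausts everything. Then transfer along ${\mathcal M}_0 \prec {\mathcal M}$ using a Kotlarski–Schmerl-type argument to get $Y$ with $({\mathcal M}_0, X_0) \prec ({\mathcal M}, Y)$; since the coding sentences in (iii) are first-order in $\lpa\cup\{X\}$ (with $b_0 \in M_0$), they persist upward, and genericity of $X_0$ plus preservation of the elementary chain gives inductiveness and undefinability of $Y$. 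The obstacle is that $b_0 \in M$ need not lie in $M_0$; to handle a bounded $A$ that is not contained in the prime model, one must instead run the construction so that the coding condition is arranged for the specific cut below $b_0$, which is where resplendence of the countable recursively saturated ${\mathcal M}$ — rather than a transfer from the prime model — does the real work: one directly verifies that the recursive theory $S$ (with the constant $b_0$) is finitely satisfiable in ${\mathcal M}$ by, at each finite stage, choosing finitely many values of $X$ below increasing bounds so as not to collapse any of finitely many Skolem-term relations on $A$, using that $|A| \le |M| $ is countable and that at each stage only finitely many constraints are active. Assembling these finite approximations into a recursive type realized in the resplendent ${\mathcal M}$ yields $Y$.

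I expect the genuine difficulty to be exactly this finite-satisfiability verification: one must show that committing to $X \restriction b_0$ agreeing with a proposed finite pattern, while forcing the coding elements $d_\theta$ to exist, never conflicts with keeping $\dcl^{({\mathcal M},X)}(b) = \dcl^{\mathcal M}(b)$ for $b \in A$ — i.e., that no finite amount of information about $X$ below $b_0$ can define a new element of $A$ from another. Since $A$ is bounded, there is essentially only a set-of-size-$|A|$ worth of pairs $(a,b)$ to protect, and new definitions using $X$ would require infinitely much information about $X$ (the full restriction below $b_0$), so a diagonalization/genericity argument — adding one undefinability requirement and one coding requirement at a time while keeping the $\dcl$-pairs of $A$ frozen — should close the consistency argument. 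The boundedness of $A$ is essential here and is what distinguishes this theorem from the impossibility of a fully neutral undefinable class in a recursively saturated model (Corollary \ref{cor 2}).
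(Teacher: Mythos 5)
There is a genuine gap, and it sits at both ends of your plan. First, the reduction to a recursive theory $S$ does not work: your scheme (iii) merely asserts, for each $\lpa\cup\{X\}$-formula $\theta$, that some $d\in M$ codes the graph, below $b_0$, of the partial function defined by $\theta$. For any inductive $X$ this is automatic (since $({\mathcal M},X)\models {\sf PA}^*$, every bounded $({\mathcal M},X)$-definable set is coded), and it only yields $a\in\dcl^{\mathcal M}(b,d)$, not $a\in\dcl^{\mathcal M}(b)$. In Proposition \ref{prop1} the analogous step works because $d$ can be found in the prime model, i.e.\ $d\in\dcl(\emptyset)$; for a bounded $A$ in an arbitrary countable recursively saturated ${\mathcal M}$ there is no such anchor, and the statement ``the restricted graph has a parameter-free definition'' is an infinite disjunction over $\lpa$-formulas, not something you can package into a recursive first-order theory to be realized by resplendence (compare the paper's own observation that neutrality is not first-order). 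So even if your $S$ were realized by some $({\mathcal M},Y)$, nothing in $S$ makes $Y$ $A$-neutral.

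Second, the consistency step you defer (``a diagonalization/genericity argument should close it'') is exactly where the content of the theorem lies, and genericity alone does not supply it. If $X$ is generic, a definition of $a$ from $b$ in $({\mathcal M},X)$ reduces, via the forcing lemma, to a definition of $a$ from $b$ together with a forcing condition $p$; the whole problem is to eliminate the parameter $p$, and no finite bookkeeping about the values of $X$ below $b_0$ accomplishes that. The paper's proof (Schmerl's) does it by a structural choice: take $A$ to be (WLOG) an elementary cut, let $C$ be a cofinal set of realizations of a minimal type lying above $A$, take $G$ generic over $\dcl(C)$, and use the Kotlarski--Schmerl lemma to get $X$ with $(\dcl(C),G)\prec({\mathcal M},X)$. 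Cofinality of $G$ in $X$ lets one choose the relevant condition $q$ inside $\dcl(C)$, and conservativity of ${\mathcal A}\prec_{\rm cons}\dcl(A\cup C)$ (minimal types are definable) guarantees that any set defined with parameter $q$ traces a subset of $A$ definable in ${\mathcal A}$ without parameters; that is what converts ``$a$ definable from $b$ and $q$'' into $a\in\dcl(b)$. This use of minimal types and conservative extensions to neutralize the forcing conditions is the idea missing from your proposal.
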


\begin{proof}
Without loss of generality, we can assume that $A$ is an elementary cut of $\mathcal M$. Let $p(x)$ be a minimal type realized in ${\mathcal M}$, and let $C \subseteq M \setminus A$ be a cofinal set of realizations of $p(x)$. Let $N = \dcl(A \cup C)$. Since minimal types are definable, we have ${\mathcal A} \prec_\text{cons} {\mathcal N} \prec_\text{cof} {\mathcal M}$. First we notice that if 
\begin{displaymath}
Y = \{ x : {\mathcal N} \models \vp(x, a, \bar{c}) \},
\end{displaymath}
where $a \in A$ and $\bar{c}$ is a tuple of elements of $C$, then $Y \cap A$ is definable using only $a$ as a parameter.

Let $G \subseteq \dcl(C)$ be generic. By the Kotlarski-Schmerl lemma, there is $X \subseteq M$ such that $(\dcl(C), G) \prec ({\mathcal M}, X)$. We notice that $G$ is cofinal in $X$ and, since $G$ is generic, $X$ is also generic. Let $a, b \in A$ be such that $a \in \dcl^{({\mathcal M}, X)}(b)$. That is, there is a formula $\vp(x, y)$ in the language $\lpa \cup \{ X \}$ such that 
\begin{displaymath}
({\mathcal M}, X) \models \vp(a, b) \wedge \exists !x \vp(x, b).
\end{displaymath}

By the forcing lemma for arithmetic \cite[Lemma 6.2.6]{ks}, there is a relation $\Vdash$ definable in $\lpa$ such that for each  formula $\sigma(x, y)$ of $\lpa \cup \{ X \}$,  and for all parameters $m, n \in M$, there is $p \in X$ such that
\begin{displaymath}
({\mathcal M}, X) \models \sigma(m, n) \textup{ iff }{\mathcal M} \models p \Vdash \sigma(m, n).
\end{displaymath}
Let $p \in X$ be such that ${\mathcal M} \models p \Vdash [\vp(a, b) \wedge \exists! x \vp(x, b)]$. Since $G$ is cofinal in $X$, there is $q \in G$ extending $p$, so ${\mathcal M} \models q \Vdash [\vp(a, b) \wedge \exists! x \vp(x, b)]$. Let
\begin{displaymath}
Y = \{ \langle x, y \rangle : {\mathcal N} \models q \Vdash [\vp(x, y) \wedge \exists! z \vp(z, y)] \}
\end{displaymath}
Notice that $Y$ is definable using only the parameter $q \in G \subseteq C$, so $Y \cap A$ is definable in $A$ without parameters. Since $a$ is unique such that $\langle a, b \rangle \in Y \cap A$, $a \in \dcl(b)$.
\end{proof}

We can also find cofinal subsets $A$ of countable, recursively saturated models for which there exist $A$-neutral sets.

\begin{prop}If ${\mathcal M}$ is countable and recursively saturated, then there are  a cofinal  $A\subseteq M$ and an undefinable inductive  $X\subseteq M$ that is $A$-neutral.
\end{prop}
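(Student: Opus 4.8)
The plan is to repeat the forcing construction from the preceding theorem, but, since $A$ must now be cofinal, to replace the cut used there by one half of a set of minimal-type indiscernibles, with the other half generating the generic set. As $\mathcal M$ is countable, recursively saturated, and hence tall, fix a minimal type $p(x)$ realized in $\mathcal M$ and, using recursive saturation, an increasing cofinal set $E=\{e_n : n<\om\}$ of realizations of $p$; by the theory of minimal types (Chapter 3 of \cite{ks}) $E$ is a set of order-indiscernibles which is $\dcl$-independent, i.e.\ $\dcl^{\mathcal M}(F)\cap E=F$ for every finite $F\subseteq E$. Split $E$ into two disjoint cofinal sets $C=\{e_{2n} : n<\om\}$ and $D=\{e_{2n+1} : n<\om\}$, and put $A:=C$. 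Since $D$ is cofinal, $\dcl(D)\prec_{\rm cof}\mathcal M$. Choose a generic $G\subseteq\dcl(D)$ (see \cite{sim} or \cite[Chapter 6]{ks}); by the Kotlarski-Schmerl lemma there is a unique $X\subseteq M$ with $(\dcl(D),G)\prec(\mathcal M,X)$, and, exactly as in the proof of Proposition \ref{prop1}, $X$ is inductive and is itself generic, hence undefinable. Being generic, $G$ is unbounded in $\dcl(D)$, hence cofinal in $\mathcal M$, and so cofinal in $X$.

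It remains to check that $X$ is $A$-neutral. By $\dcl$-independence, $\dcl^{\mathcal M}(c)\cap C=\{c\}$ for each $c\in C$, so $X$ will be $A$-neutral once we show that also $\dcl^{(\mathcal M,X)}(c)\cap C=\{c\}$ for each $c\in C$, i.e.\ that $c'\in\dcl^{(\mathcal M,X)}(c)$ with $c,c'\in C$ entails $c'=c$. Suppose then that $(\mathcal M,X)\models\vp(c',c)\wedge\exists !x\,\vp(x,c)$ for some $\lpa\cup\{X\}$-formula $\vp$, and write $\psi(x,y)$ for the formula $\vp(x,y)\wedge\exists !z\,\vp(z,y)$. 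By the forcing lemma for arithmetic \cite[Lemma 6.2.6]{ks} there is $r\in X$ with $\mathcal M\models r\Vdash\psi(c',c)$; since $G$ is cofinal in $X$ there is $q\in G\subseteq\dcl(D)$ extending $r$, so $\mathcal M\models q\Vdash\psi(c',c)$ as well. Write $q=f(\bar{d})$ with $\bar{d}$ a finite tuple from $D$. Then $\{x : \mathcal M\models q\Vdash\psi(x,c)\}$ is definable in $\mathcal M$ from $\bar{d}$ and $c$; it contains $c'$, and it is a singleton, since any element $x_0$ of it satisfies $(\mathcal M,X)\models\psi(x_0,c)$ (as $q\in X$, by the forcing lemma) and hence $x_0=c'$. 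Therefore $c'\in\dcl^{\mathcal M}(\bar{d}\cup\{c\})$; as $\bar{d}\cup\{c\}$ is a finite subset of $E$, $\dcl$-independence gives $c'\in\bar{d}\cup\{c\}$, and since $c'\in C$ while $\bar{d}\subseteq D$ is disjoint from $C$, we conclude $c'=c$.

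The only genuine departure from the preceding proof is the treatment of the parameters $\bar{d}$ introduced by the forcing condition: for cofinal $A$ there is no conservative end extension in which to absorb them, so instead one confines them to $D$ and uses $\dcl$-independence of the minimal-type indiscernibles to prevent them from creating a new dependence inside $C$. Making this separation compatible with cofinality of both $A$ and the set used to generate $X$ — splitting one cofinal family of indiscernibles into two — is the crux; the rest goes through as before. (This is consistent with Corollary \ref{cor 2}: $X$ is an undefinable class, hence not neutral, but it is only required to be $A$-neutral.)
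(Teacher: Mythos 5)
Your proof is correct, but it is genuinely different from the one in the paper. The paper makes the choices in the opposite order: it first fixes an inductive undefinable $X$ with $({\mathcal M},X)$ recursively saturated (chronic resplendence), then takes a minimal type $p$ \emph{of the expanded structure} $({\mathcal M},X)$ and lets $A$ be the set of all realizations of $p$; $A$-neutrality is then immediate from Ehrenfeucht's lemma \cite{ehr}, since any two elements of $A$ realize the same type in $({\mathcal M},X)$, so $a\in\dcl^{({\mathcal M},X)}(b)$ forces $a=b$ --- no forcing, no Kotlarski--Schmerl transfer. You instead choose $A$ first, as one half of a cofinal family of realizations of a minimal type of ${\mathcal M}$ itself, generate $X$ from a generic over $\dcl(D)$ exactly as in Schmerl's argument for bounded $A$, and then neutralize the forcing parameters by confining them to $D$. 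The one substantive point you lean on is the $\dcl$-independence of realizations of a minimal type: this is true, but it is not quite a verbatim statement of Chapter 3 of \cite{ks}, so it deserves a line of justification --- by indiscernibility every increasing tuple of realizations of $p$ realizes the canonical Gaifman iterate of $p$, and in the iterated extension no generator lies in the definable closure of the others (use definability of the iterated type to eliminate the generators above the given one, and the end-extension property to rule out definability from those below). Granting that, your forcing step, the cofinality of $G$ in $X$, and the uniqueness argument for $\{x:{\mathcal M}\models q\Vdash\psi(x,c)\}$ all go through as in the bounded case. As for what each approach buys: the paper's proof is much shorter, but its $A$ is tied to the expansion (it is a set of indiscernibles of $({\mathcal M},X)$, found after $X$); yours produces a generic $X$ and an $A$ that is a set of indiscernibles of ${\mathcal M}$ fixed in advance, independently of $X$, and it makes the cofinal case run exactly parallel to the bounded-$A$ theorem, which is a pleasant uniformity.
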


\begin{proof}
Let $X$ be an inductive, undefinable subset of $M$ such that $({\mathcal M}, X)$ is recursively saturated. Such $X$ exists by chronic resplendence of ${\mathcal M}$. Let $p(x)$ be a minimal type in the language $\lpa \cup \{ X \}$ realized in $({\mathcal M}, X)$. Let $A$ be the set of realizations of $p(x)$ in ${\mathcal M}$. Then  $A$ is cofinal in $M$. We will show that $X$ is $A$-neutral.
To see this, let $a, b \in A$ and suppose $a \in \dcl^{({\mathcal M}, X)}(b)$. By Ehrenfeucht's Lemma \cite{ehr} (see also \cite[Theorem 1.7.2]{ks}),  if $a \neq b$, then $\mathrm{tp}^{({\mathcal M},X)}(a) \neq \mathrm{tp}^{({\mathcal M},X)}(b)$. Since $a$ and $b$ realize the same type, this means $a = b$.
\end{proof}

In the previous result, we found a particular cofinal subset $A$ of a recursively saturated model ${\mathcal M}$ for which there exists an $A$-neutral inductive subset of $M$. Our last question asks if there are cofinal elementary submodels ${\mathcal A}$ of ${\mathcal M}$ for which we have $A$-neutral inductive subsets.

\begin{prob}If ${\mathcal M}$ is recursively saturated, is there ${\mathcal A} \prec_\text{cof} {\mathcal M}$ such that ${\mathcal M}$ has an undefinable $A$-neutral inductive subset?
\end{prob}

\bibliographystyle{plain}
\bibliography{references}

\section*{Acknowledgements} Jim Schmerl's  comments on a preliminary version of this paper allowed us to improve some of the results and the overall presentation. Thank you Jim.

\end{document}